\theoremstyle{plain}
\newtheorem{thm}{Theorem}[section]
\newtheorem{lem}[thm]{Lemma}
\newtheorem{conj}[thm]{Conjecture}
\theoremstyle{definition}
\newtheorem{rem}[thm]{Remark}
\DeclareMathOperator{\Alb}{Alb}
\DeclarePairedDelimiter\abs{\lvert}{\rvert}
\DeclarePairedDelimiter\norm{\lVert}{\rVert}
\let\oldabs\abs
\def\abs{\@ifstar{\oldabs}{\oldabs*}}
\let\oldnorm\norm
\def\norm{\@ifstar{\oldnorm}{\oldnorm*}}
\title{Singer Conjecture for Varieties with Semismall Albanese Map and Residually Finite Fundamental Group}
 \author{\small{Luca F. Di Cerbo}\footnote{Supported in part by NSF grant DMS-2104662} \\ \scriptsize{University of Florida} \\ \footnotesize{\textsf{ldicerbo@ufl.edu}} \and 
\small{Luigi Lombardi}\footnote{Partially supported by GNSAGA-INDAM, 
PRIN 2020: ``Curves, Ricci flat varieties and their interactions'', 
and PRIN 2022: ``Synplectic varieties: their interplay with Fano manifolds and derived categories''.} \\ 
\scriptsize{Università degli Studi di Milano Statale}\\ \footnotesize{\textsf{luigi.lombardi@unimi.it}}}
\date{}
\begin{document}

\maketitle

\begin{abstract}
 We prove the Singer conjecture for varieties with semismall Albanese map and residually finite fundamental group.
 
\end{abstract}
\vspace{8cm}
\tableofcontents\quad\\

\vspace{1cm}

\section{Introduction and Main Results}

An important problem in modern geometry and topology is a conjecture of Singer concerning the $L^2$-Betti numbers of an aspherical closed manifold. 

\begin{conj}[Singer Conjecture]\label{Singer}
	If $X$ is a closed aspherical manifold of real dimension $2n$, then the $L^2$-Betti numbers are:
	\begin{equation*}
	b^{(2)}_{k}(X; \widetilde{X})=\begin{cases} 
	(-1)^n \chi_{\rm top}(X)  &\mbox{if }\quad    k =  n  \\
	0 & \mbox{if }\quad   k  \neq n
	\end{cases}
	\end{equation*}
	where $\pi \colon \widetilde{X}\rightarrow X$ is the topological universal cover of $X$.
\end{conj}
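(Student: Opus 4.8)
The plan is to establish the conjecture for $X$ a smooth projective, aspherical variety of complex dimension $n$ with $\Gamma:=\pi_1(X)$ residually finite and semismall Albanese map $a\colon X\to A:=\mathrm{Alb}(X)$. \textbf{Step 1 (reduction to the off-middle degrees).} Since $\Gamma$ is infinite, $b^{(2)}_0(\widetilde X)=0$, and $L^2$-Poincar\'e duality gives $b^{(2)}_{2n}(\widetilde X)=0$ together with $b^{(2)}_k(\widetilde X)=b^{(2)}_{2n-k}(\widetilde X)$. Because $\sum_k(-1)^k b^{(2)}_k(\widetilde X)=\chi_{\rm top}(X)$, it suffices to prove $b^{(2)}_k(\widetilde X)=0$ for $0<k<n$: the value $b^{(2)}_n(\widetilde X)=(-1)^n\chi_{\rm top}(X)$ then drops out, and the bound $(-1)^n\chi_{\rm top}(X)\ge 0$ is automatic, being a von Neumann dimension.

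\textbf{Step 2 (geometric input: semismallness and generic vanishing).} Semismallness is a condition on fibre dimensions, so it forces $Ra_*(L[n])$ to be a perverse sheaf on $A$ for \emph{every} local system $L$ on $X$ (for $L=\mathbb C_X$ this also follows from the Decomposition Theorem). By the Generic Vanishing Theorem for perverse sheaves on complex abelian varieties (Schnell; Kr\"amer--Weissauer; Gabber--Loeser), for a perverse sheaf $P$ on $A$ the cohomology support loci $S^j(A,P)=\{\rho\in\mathrm{Char}(A):\mathbb H^j(A,P\otimes_{\mathbb C}\mathbb C_\rho)\neq 0\}$ are proper of codimension $\ge 2|j|$ in $\mathrm{Char}(A)\cong(\mathbb C^*)^{2q}$ for $j\neq 0$, where $q=\dim A$. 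For $P=Ra_*\mathbb C_X[n]$ this means the jump loci $V^k(X)=\{\rho:H^k(X;\mathbb C_\rho)\neq 0\}$ (with $\rho$ pulled back from $A$) are proper for $k\neq n$, of codimension $\ge 2|n-k|$. The same applies to every finite \'etale cover $\pi\colon X'\to X$: the composite $X'\xrightarrow{\pi}X\xrightarrow{a}A$ has the same fibre dimensions as $a$, hence is again semismall, so $Ra_*(\pi_*\mathbb C_{X'})[n]$ is perverse with the same codimension bounds.

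\textbf{Step 3 (passage to the universal cover).} By residual finiteness choose a tower $\Gamma=\Gamma_0\supseteq\Gamma_1\supseteq\cdots$ of finite-index normal subgroups with $\bigcap_i\Gamma_i=1$, chosen to also dominate the covers of $X$ obtained by pulling back the isogenies $[m]\colon A\to A$; let $X_i\to X$ be the associated covers and $d_i:=[\Gamma:\Gamma_i]$. Lück's approximation theorem gives $b^{(2)}_k(\widetilde X)=\lim_i b_k(X_i)/d_i$, so one must show $b_k(X_i)=o(d_i)$ for $0<k<n$. Decomposing $H^k(X_i;\mathbb C)$ over the irreducible representations $\rho$ of the relevant finite Galois group and using the projection formula over $A$, this amounts to estimating $\sum_{\rho}(\dim\rho)\sum_{\chi}\dim\mathbb H^{k-n}\bigl(A,\,Ra_*(\mathcal L_\rho)[n]\otimes\mathbb C_\chi\bigr)$, the inner sum running over characters $\chi$ of $A$ of order dividing $m_i$. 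Each $Ra_*(\mathcal L_\rho)[n]$ is perverse, so for $k\neq n$ its support locus in $\mathrm{Char}(A)$ has codimension $\ge 2|n-k|\ge 2$ by Step 2; estimating the number of torsion points on that locus, together with the generic multiplicity there, in terms of $\dim\rho$ makes the displayed quantity $o(d_i)$. Hence $b^{(2)}_k(\widetilde X)=0$ for $k\neq n$, and the conjecture follows from Step 1.

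\textbf{Main obstacle.} The delicate point is this last passage. Generic vanishing controls only cohomology twisted by characters of the \emph{abelian} variety $A$ --- in effect, the universal abelian cover --- whereas Singer's conjecture concerns the full universal cover, and $\Gamma$ is typically far from abelian (already for curves of genus $\ge 2$). One must relate the Betti numbers of the covers $X_i$, whose Galois groups need not be abelian, to twisted cohomology over $A$ via the Decomposition Theorem for the semismall composites $X_i\to A$; note one pushes forward to $A$, not to $\mathrm{Alb}(X_i)$, which may be strictly larger. Above all, the codimension and, especially, the generic-multiplicity bounds for $Ra_*(\mathcal L_\rho)[n]$ must be controlled \emph{uniformly} in $\rho$ and along the tower, so that the equidistribution estimate survives division by $d_i\to\infty$; securing that uniformity, with semismallness supplying the crucial codimension $\ge 2$, is the heart of the argument.
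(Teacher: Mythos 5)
Your Steps 1 and 2 are sound and match the geometric input the paper relies on: semismallness of $a_X$ (preserved under finite unramified covers, which is the paper's Lemma \ref{etale}) makes the pushforward of $\C_X[n]$ perverse on the Albanese variety, and generic vanishing for perverse sheaves on abelian varieties gives codimension $\geq 2$ bounds on the jump loci away from the middle degree; combined with L\"uck approximation and Atiyah's index formula this is exactly the intended skeleton. The problem is Step 3, and you have correctly diagnosed it yourself: as written, you need to bound $b_k(X_i)/d_i$ for the members $X_i$ of a cofinal tower of the (nonabelian) group $\Gamma$, which forces you to decompose over all irreducible representations $\rho$ of the Galois groups and to control the jump loci and generic multiplicities of $Ra_*(\mathcal L_\rho)[n]$ \emph{uniformly} in $\rho$ and in $i$. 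No such uniform estimate is supplied, and none is available in the literature; generic vanishing only gives you, for each \emph{fixed} variety, an equidistribution count of torsion characters of its Albanese on a proper subvariety of the character torus. So the proof as proposed does not close.

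The paper circumvents this obstacle entirely rather than confronting it. Starting from a cofinal tower $\tau_\ell\colon X_\ell\to X$, it builds a new cofinal filtration by interleaving: at stage $\ell$ one takes the current finite unramified cover $Z_{\ell-1}$ of $X$ (still with semismall Albanese map by Lemma \ref{etale}), pulls back a multiplication map $\mu_d$ on $\Alb(Z_{\ell-1})$ for $d\gg 1$ to get $\psi_\ell\colon Y_\ell\to Z_{\ell-1}$ with $b_k(Y_\ell)/\deg\psi_\ell\leq 1/\ell$ for $k\neq n$ (this is \cite[Corollary 1.2]{DCL19a}, i.e.\ precisely the abelian-character generic vanishing count applied to one fixed variety at a time), and then pulls back the next layer $X_{\ell+1}\to X_\ell$ to continue. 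Since $\deg\varphi_\ell\geq\deg\psi_\ell$ for the composites $\varphi_\ell\colon Y_\ell\to X$, the normalized Betti numbers of the new tower tend to $0$, and L\"uck's theorem --- crucially, the fact that the limit is \emph{independent of the choice of cofinal filtration} --- gives $b^{(2)}_k(X;\widetilde X)=0$ for $k\neq n$. In other words, the diagonal zig-zag replaces the uniform-in-$\rho$ estimate you flag as ``the heart of the argument'' by a sequence of one-variety-at-a-time abelian estimates. If you incorporate that construction, your Steps 1 and 2 complete the proof of the paper's Theorem \ref{main1}; note also that the statement under review is the general Singer Conjecture, which neither you nor the paper proves outside the semismall, residually finite projective setting.
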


This conjecture was inspired by Atiyah's work \cite{Atiyah} on the $L^2$-index theorem for coverings.
Moreover  its resolution in the affirmative would   settle an old problem of Hopf regarding  the sign of the Euler characteristic of aspherical manifolds. 

\begin{conj}[Hopf Conjecture]\label{Hopf}
	If $X$ is a closed aspherical manifold of real dimension $2n$, then:
	\begin{equation*}
	(-1)^n\chi_{\rm top}(X)\geq 0.
	\end{equation*}
	
\end{conj}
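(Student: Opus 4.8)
The plan is to derive the Hopf inequality as a formal consequence of the Singer conjecture (Conjecture~\ref{Singer}). Indeed, by Atiyah's $L^2$-index theorem the topological Euler characteristic of a closed manifold $X$ of real dimension $2n$ equals its $L^2$-Euler characteristic,
\[
\chi_{\rm top}(X)\;=\;\sum_{k=0}^{2n}(-1)^k\,b^{(2)}_k(X;\widetilde X),
\]
and every $L^2$-Betti number is a non-negative real number. If $X$ is aspherical and Conjecture~\ref{Singer} holds for it, all the summands on the right vanish except the middle one, whence
\[
(-1)^n\chi_{\rm top}(X)\;=\;b^{(2)}_n(X;\widetilde X)\;\ge\;0,
\]
which is exactly the Hopf inequality. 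So, for the manifolds of interest here, the Hopf conjecture is strictly weaker than the Singer conjecture, and the real task is to establish the latter for closed aspherical smooth complex projective varieties with semismall Albanese map and residually finite fundamental group.

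For that reduced problem the approach I would take is as follows. First, replace the $L^2$-Betti numbers by normalized ordinary Betti numbers of a tower of finite étale covers: residual finiteness of $\pi_1(X)$ makes Lück's approximation theorem available, so $b^{(2)}_k(X;\widetilde X)=\lim_i b_k(X_i)/[\pi_1(X):\pi_1(X_i)]$ along any cofinal chain of finite-index normal subgroups, and each $X_i$ is again aspherical with semismall Albanese map. Second, bring in complex geometry: via the $L^2$-Hodge decomposition one splits $b^{(2)}_k$ into $L^2$-Hodge numbers of bidegree $(p,q)$ with $p+q=k$, and the relevant vanishing is governed by the derived pushforward of the canonical bundle along the Albanese morphism $a\colon X\to\Alb(X)$. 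Semismallness of $a$ bounds the dimensions of its fibres, hence the supports and the degrees of the (perverse) pieces of $Ra_*\omega_X$ on the abelian variety; combining this with asphericity — which makes $X$ a $K(\pi_1(X),1)$ and thereby identifies its cohomology with that of $\pi_1(X)$ — and with generic vanishing theory on $\Alb(X)$ (in the circle of ideas of Green--Lazarsfeld, Hacon, and Pareschi--Popa) should confine all $L^2$-cohomology to the middle degree $k=n$ and pin $b^{(2)}_n(X;\widetilde X)$ to $(-1)^n\chi_{\rm top}(X)$.

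The main obstacle is the second step, and specifically the precise mechanism by which semismallness of the Albanese map is converted into vanishing of $L^2$-cohomology outside degree $n$. One has to: set up $L^2$-cohomology twisted by pulled-back flat unitary line bundles so that it is simultaneously amenable to Lück approximation and to the Hodge-theoretic analysis; carry out a Leray or decomposition-theorem argument for $a$ in which the contribution of the positive-dimensional fibre locus is exactly offset by the cohomological range available on the base abelian variety — this \emph{balancing} is what forces the numerology $k=n$, and it is where semismallness enters in an essential way; and control the error terms in the approximation so that anomalies at finite level cannot survive in the limit. A final, more cosmetic point: since the Hopf conjecture as displayed above is stated for arbitrary closed aspherical manifolds, the conclusion one actually obtains should be phrased for smooth projective (or compact Kähler) varieties that are aspherical, have semismall Albanese map, and residually finite fundamental group — that is the honest scope of the deduction.
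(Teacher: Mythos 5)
The statement you are asked about is labelled a \emph{conjecture} in the paper, and the paper offers no proof of it: Conjecture~\ref{Hopf} is stated only as motivation, with the remark that an affirmative resolution of Conjecture~\ref{Singer} would settle it. Your first paragraph reproduces exactly that standard implication, and it is correct: by Atiyah's $L^2$-index theorem $\chi_{\rm top}(X)=\sum_k(-1)^k b^{(2)}_k(X;\widetilde X)$, $L^2$-Betti numbers are non-negative, and if all of them vanish away from the middle degree then $(-1)^n\chi_{\rm top}(X)=b^{(2)}_n(X;\widetilde X)\ge 0$. (In fact, with the Singer conjecture stated as in the paper, the $k=n$ clause alone already gives $(-1)^n\chi_{\rm top}(X)=b^{(2)}_n\ge 0$ without invoking Atiyah's formula.) But this is a reduction of one open conjecture to another, not a proof: the Singer conjecture is unproven for general closed aspherical manifolds, so your argument establishes the Hopf inequality only on the class of manifolds for which Singer is known. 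You acknowledge this honestly, and to that extent your proposal is the correct reading of the logical role Conjecture~\ref{Hopf} plays in the paper.

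Where your proposal diverges from the paper is in the second step, the proof of the Singer conjecture for the restricted class actually treated (smooth projective $X$ with semismall Albanese map and residually finite $\pi_1$ --- note the paper does \emph{not} assume asphericity there). You propose an $L^2$-Hodge decomposition combined with the decomposition theorem and generic vanishing for $Ra_{X*}\omega_X$ on the Albanese variety. The paper instead argues entirely through L\"uck approximation: starting from a cofinal tower $\tau_\ell\colon X_\ell\to X$, it interleaves covers $\psi_\ell\colon Y_\ell\to Z_{\ell-1}$ pulled back from multiplication maps $\mu_d$ on $\Alb(X_\ell)$ so that $b_k(Y_\ell)/\deg\psi_\ell\le 1/\ell$ for $k\ne n$, using \cite[Corollary 1.2]{DCL19a} as a black box together with Lemma~\ref{etale} (semismallness persists under finite unramified covers); the vanishing of $b^{(2)}_k$ for $k\ne n$ then follows by taking the limit, and the middle value is forced by the Euler characteristic. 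The Hodge-theoretic and generic-vanishing machinery you describe is essentially what lies behind the cited input from \cite{DCL19a}, so your sketch is not wrong in spirit, but it is not the argument of this paper, and as written it leaves the crucial quantitative step (why semismallness kills the normalized Betti numbers outside degree $n$) unproved rather than delegated to a precise reference.
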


Conjectures \ref{Singer} and \ref{Hopf} played an important role in the development of modern differential geometry, geometric topology, and algebraic geometric. Indeed, they figure prominently in Yau's influential list of problems in geometry, see \cite[Section VII, Problem 10]{SchoenY} and \cite[Section IX, Problem 39]{SchoenY} for a variation of Conjecture \ref{Singer} in terms of normalized Betti numbers of Galois covers. We refer to  L\"uck's book \cite{Luck02} for a comprehensive introduction to this circle of ideas, for the definition of $L^2$-Betti numbers, and for a detailed historical account on the Singer conjecture (\emph{cf}. \cite[\S 11]{Luck02}). 

Despite being many decades old, these problems continue to be at the center of a substantial amount of research activity. Many researchers are currently addressing these conjectures (see for instance \cite{Bergeron2}, \sloppy \cite{DCL19a}, \cite{AOS}, \cite{LMW17a}, \cite{LMW17b}, \cite{DS22}) using diverse techniques coming from algebraic geometry, geometric analysis, and geometric topology. For classical papers on this problem, the interested reader may refer to \cite{Dod79}, \cite{DX84}, \cite{Gro}, \cite{Luck}, \cite{JZ}, just to name a few. 

In this paper, we contribute to the study of Conjecture \ref{Singer} within the realm of smooth projective varieties. More precisely, we prove Singer conjecture for smooth projective irregular varieties with semismall Albanese map and residually finite fundamental group.

\begin{thm}\label{main1}
	Let $X$ be a smooth projective variety of complex dimension $n$  and let $\widetilde X$ be the topological universal cover. 
	If the Albanese map of $X$ is semismall and $\pi_1(X)$ is residually finite, then the $L^2$-Betti numbers are:
	\begin{equation*}
	b^{(2)}_k (X; \widetilde{X})=\begin{cases} 
	(-1)^n \chi_{\rm top}(X)  &\mbox{if }\quad    k =  n  \\
	0 & \mbox{if }\quad   k  \neq n.
	\end{cases}
	\end{equation*}
\end{thm}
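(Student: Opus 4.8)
The plan is to deduce the middle-degree value from the vanishing in the remaining degrees, and to prove that vanishing by feeding a generic vanishing statement on the Albanese variety into an approximation argument over the residually finite tower. Once $b^{(2)}_k(X;\widetilde{X})=0$ is known for every $k\neq n$, Atiyah's $L^2$-index theorem \cite{Atiyah} gives $\sum_k(-1)^k b^{(2)}_k(X;\widetilde{X})=\chi_{\rm top}(X)$, hence $(-1)^n b^{(2)}_n(X;\widetilde{X})=\chi_{\rm top}(X)$ (and, incidentally, $(-1)^n\chi_{\rm top}(X)\ge 0$, which is Hopf's inequality in this setting). So the real content is the vanishing $b^{(2)}_k(X;\widetilde{X})=0$ for $k\neq n$, and since $G:=\pi_1(X)$ is residually finite, Lück's approximation theorem \cite{Luck02} reduces this to exhibiting a single nested tower of connected finite étale Galois covers $X_s\to X$ with $\bigcap_s\pi_1(X_s)=\{1\}$ along which $b_k(X_s)/\deg(X_s/X)\to 0$ for every $k\neq n$. (If $q(X)=0$ then semismallness of the Albanese map forces $n=0$ and there is nothing to prove, so assume $q(X)\ge 1$.)

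The first ingredient is that semismallness of the Albanese map passes to finite étale covers. For a morphism $h$ write $S_\ell(h):=\{y:\dim h^{-1}(y)\ge\ell\}$, so that $h\colon Z\to W$ with $\dim Z=N$ is semismall iff $\dim S_\ell(h)\le N-2\ell$ for all $\ell\ge 1$. Let $\pi\colon X'\to X$ be finite étale. Functoriality of the Albanese gives a surjection $g\colon\Alb(X')\to\Alb(X)$ with $g\circ\mathrm{alb}_{X'}=\mathrm{alb}_X\circ\pi$ whose fibres are the cosets of the abelian subvariety $K:=\ker g$, and $\mathrm{alb}_X\circ\pi$ is semismall because $\pi$ is finite. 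Fix a component $W_0$ of $S_\ell(\mathrm{alb}_{X'})$ and let $s$ be the dimension of $W_0$ intersected with a general $K$-coset, so $\dim g(W_0)=\dim W_0-s$. For general $a\in g(W_0)$ the set $\mathrm{alb}_{X'}^{-1}(g^{-1}(a))=(\mathrm{alb}_X\circ\pi)^{-1}(a)$ contains $\mathrm{alb}_{X'}^{-1}(W_0\cap g^{-1}(a))$, which has dimension $\ge s+\ell$; hence $g(W_0)\subseteq S_{s+\ell}(\mathrm{alb}_X\circ\pi)$, and semismallness of $\mathrm{alb}_X\circ\pi$ gives $\dim W_0-s=\dim g(W_0)\le\dim X'-2(s+\ell)$, i.e. $\dim W_0\le\dim X'-2\ell$. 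Thus $\mathrm{alb}_{X'}$ is semismall, and every variety occurring in a tower of finite étale covers of $X$ again has semismall Albanese map.

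The heart of the argument is a generic vanishing estimate applied to auxiliary arithmetic covers: pulling back multiplication-by-$m$ isogenies on the Albanese kills cohomology away from the middle degree, while simultaneously running through the residually finite tower fills in the kernel of $G\to\pi_1(\Alb(X))$. Fix a cofinal tower of finite-index normal subgroups $\Gamma_i\triangleleft G$ with $\bigcap_i\Gamma_i=\{1\}$, and set $X_i:=\widetilde{X}/\Gamma_i$, $A_i:=\Alb(X_i)$, $f_i:=\mathrm{alb}_{X_i}$, $g_i:=\dim A_i$ (note $g_i\ge 1$). For $m\ge 1$ let $X_{i,m}\to X_i$ be the connected finite étale cover of degree $m^{2g_i}$ with $\pi_1(X_{i,m})=\ker(\Gamma_i\xrightarrow{f_{i*}}\pi_1(A_i)\to\pi_1(A_i)\otimes\Z/m)$, which is normal in $G$ by naturality of the Albanese. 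Decomposing the regular representation of the Galois group $(\Z/m)^{2g_i}$ yields
\[
b_k(X_{i,m})=\sum_{\eta\in\Picz{A_i}[m]}\dim H^k(X_i,\C_\eta),
\]
where $\C_\eta$ is the rank-one local system on $A_i$ attached to $\eta$ (and, by abuse, its pullback to $X_i$). Since $f_i$ is semismall, the decomposition theorem gives $Rf_{i*}\C_{X_i}[n]=\bigoplus_\alpha P_\alpha$ with the $P_\alpha$ simple perverse sheaves on $A_i$, so the projection formula gives $H^{n+j}(X_i,\C_\eta)=\bigoplus_\alpha\mathbb H^{j}(A_i,P_\alpha\otimes\C_\eta)$. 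Generic vanishing for perverse sheaves on abelian varieties (Krämer–Weissauer, Schnell), together with Verdier duality to handle $j<0$ and the fact that $g_i\ge 1$, shows that for $k\neq n$ the jump locus $\Sigma^k_i:=\{\eta\in\Picz{A_i}:H^k(X_i,\C_\eta)\neq 0\}$ is a finite union of torsion translates of proper subtori of $\Picz{A_i}$; hence $\#(\Sigma^k_i\cap\Picz{A_i}[m])=O_i(m^{2g_i-2})$, while $\dim H^k(X_i,\C_\eta)$ is bounded uniformly in $\eta$ by constructibility. Therefore $b_k(X_{i,m})=o(m^{2g_i})$, so for each fixed $i$ and each $k\neq n$ one has $b_k(X_{i,m})/([G:\Gamma_i]\,m^{2g_i})\to 0$ as $m\to\infty$. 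Now construct the tower recursively: given $\Delta_{s-1}$, choose $i_s$ with $\Gamma_{i_s}\subseteq\Delta_{s-1}$ (possible by cofinality) and then $m_s$ so large that $b_k(X_{i_s,m_s})/([G:\Gamma_{i_s}]\,m_s^{2g_{i_s}})<1/s$ for all $k\neq n$, and set $\Delta_s:=\pi_1(X_{i_s,m_s})$. Then $\{\Delta_s\}$ is a nested tower of finite-index normal subgroups of $G$ with $\bigcap_s\Delta_s\subseteq\bigcap_s\Gamma_{i_s}=\{1\}$ along which $b_k(\widetilde{X}/\Delta_s)/[G:\Delta_s]<1/s\to 0$ for $k\neq n$. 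By Lück's approximation theorem \cite{Luck02} we get $b^{(2)}_k(X;\widetilde{X})=0$ for all $k\neq n$, and with the reductions of the first paragraph this proves the theorem.

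The step I expect to be the main obstacle is the generic vanishing input and its implementation: one must verify that for \emph{every} degree $k\neq n$ the cohomology jump loci $\Sigma^k_i$ on $A_i$ are genuinely proper — this is precisely where the semismall hypothesis is essential, via the perversity of $Rf_{i*}\C[n]$ and the Krämer–Weissauer/Schnell vanishing theorem — and one must ensure semismallness survives all the finite covers in play and weave the arithmetic covers $X_{i,m}$ together with the residually finite tower into a single tower admissible for Lück's theorem. The remaining points — the inheritance lemma of the second paragraph, the torsion-point count in the $\Sigma^k_i$, and the uniform bound on $\dim H^k(X_i,\C_\eta)$ — are comparatively routine.
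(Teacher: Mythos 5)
Your argument follows the paper's proof essentially step for step: the inheritance of semismallness under finite \'etale covers (the paper's Lemma~\ref{etale}), the control of $b_k$ for $k\neq n$ along covers pulled back from multiplication maps on the Albanese variety --- which the paper imports as a black box from \cite[Corollary 1.2]{DCL19a} and which you re-prove inline via perversity of $Rf_{i*}\C[n]$ for the semismall $f_i$ plus Kr\"amer--Weissauer/Schnell generic vanishing and the torsion-translate structure of the jump loci (this is indeed the content of the cited result) --- then the interleaving of these arithmetic covers with a cofinal tower, and finally L\"uck approximation.

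One step needs a small repair: ``choose $i_s$ with $\Gamma_{i_s}\subseteq\Delta_{s-1}$ (possible by cofinality)'' is not automatic under the standard definition of a cofinal filtration (nested, normal, finite index, trivial intersection) --- e.g.\ for $G=\Z$, $\Gamma_i=2^i\Z$ no term is contained in $3\Z$. This is harmless: either replace $\Gamma_{i_s}$ by $\Gamma_{i_s}\cap\Delta_{s-1}$ and run your construction on the corresponding connected cover (your inheritance lemma applies to it just as well, and nestedness plus trivial intersection are preserved), or use finite generation of $\pi_1(X)$ together with the filtration-independence in L\"uck's theorem to assume from the start that the $\Gamma_i$ are exhaustive among finite-index subgroups. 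This intersection/pullback is exactly the role of the auxiliary covers $Z_\ell$ in the paper's zig-zag diagram.
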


We refer to Section \ref{Proofs} for the details of the proof of Theorem \ref{main1}. Interestingly, our result also covers many instances of projective varieties that are not necessarily aspherical, so in many ways, 
we extend the scope of the original statement of  Singer conjecture. Moreover, varieties with semismall Albanese map need not be K\"ahler hyperbolic in the sense of Gromov \cite{Gro}. In this regard, our result complements and extends Gromov's vanishing theorem and its subsequent extension by Jost-Zuo \cite{JZ}. Note that Gromov and Jost-Zuo's theorems still represent the state of the art of Conjecture \ref{Singer} for complex manifolds. While our result does not recover their statements completely, 
it covers many other cases that are currently out of reach for the analytical techniques of Gromov and Jost-Zuo. 
Moreover, in a sense, it  is not  so unreasonable to ask whether most of 
aspherical irregular projective varieties of maximal Albanese dimension have semismall Albanese map. 
Indeed, it is tantalizing to wonder when for an aspherical irregular projective variety $X$
there  exists a variety $Y$ with semismall Albanese map and with the same universal cover of $X$.
Alternatively, one could ask when a finite unramified cover $Y$ of $X$ has a semismall Albanese map.
 Notice that, because of Lemma \ref{etale}, the property of having a semismall Albanese map is preserved under finite unramified covers. 
  Clearly, these are far reaching questions of topological flavor. We hope to discuss and place such questions in a more general framework elsewhere.

Next, if we remove the residually finiteness assumption on $\pi_{1}(X)$, we have a similar statement for the $L^2$-Betti numbers computed with respect to the algebraic universal cover $\hat{\pi}\colon\hat{X}\to X$. 

\begin{thm}\label{main2}
	Let $X$ be a smooth projective variety of complex dimension $n$  and let $\hat{X}$ be the algebraic universal cover. 
    If the Albanese map of $X$ is semismall, then the $L^2$-Betti numbers are:
	\begin{equation*}
	b^{(2)}_k (X; \hat{X})=\begin{cases} 
	(-1)^n \chi_{\rm top}(X)  &\mbox{if }\quad    k =  n  \\
	0 & \mbox{if }\quad   k  \neq n.
	\end{cases}
	\end{equation*}
\end{thm}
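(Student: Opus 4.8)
The plan is to run the strategy behind Theorem \ref{main1} with $\pi_1(X)$ replaced by the deck group of the algebraic universal cover. Write $\Gamma:=\mathrm{Deck}(\hat X/X)$; by definition of $\hat X$ this is the quotient of $\pi_1(X)$ by the intersection $N$ of all its finite--index subgroups, so $\Gamma$ is residually finite, $\hat X=\widetilde X/N$, and $\Gamma$ and $\pi_1(X)$ have the same finite quotients, hence the same finite covers. First I would fix a descending chain of finite--index normal subgroups $\Gamma=G_0\supseteq G_1\supseteq\cdots$ with $\bigcap_i G_i=\{1\}$ (possible since $\Gamma$ is residually finite and finitely generated), and set $X_i:=\hat X/G_i$, a connected finite étale cover of $X$ of degree $d_i:=[\Gamma:G_i]$, with $\hat X=\varprojlim_i X_i$. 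Each $X_i$ is a smooth projective variety of complex dimension $n$, and by Lemma \ref{etale} its Albanese map is again semismall; moreover $\{X_i\}$ is cofinal among the finite étale covers of $X$.

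Next I would apply Lück's approximation theorem \cite{Luck02} to the residually finite group $\Gamma$ acting freely and cocompactly on $\hat X$: for every $k$,
\[
b^{(2)}_k(X;\hat X)=\lim_{i\to\infty}\frac{b_k(X_i)}{d_i}.
\]
Thus Theorem \ref{main2} becomes the conjunction of (i) $\lim_i b_k(X_i)/d_i=0$ for $k\neq n$ and (ii) $\lim_i b_n(X_i)/d_i=(-1)^n\chi_{\rm top}(X)$. Granting (i), assertion (ii) is automatic from Atiyah's $L^2$--index theorem \cite{Atiyah}, which gives $\sum_k(-1)^kb^{(2)}_k(X;\hat X)=\chi_{\rm top}(X)$ (equivalently $\sum_k(-1)^k b_k(X_i)=d_i\,\chi_{\rm top}(X)$, divided by $d_i$ and passed to the limit). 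So the whole content is the vanishing of the normalized Betti numbers of the tower outside the middle degree.

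That vanishing is precisely the estimate carried out in the proof of Theorem \ref{main1}: for a smooth projective $n$--fold with semismall Albanese map one exploits the rigid structure of a semismall map to an abelian variety --- via the decomposition theorem for the Albanese map together with generic vanishing, in its Green--Lazarsfeld form and in the Pareschi--Popa refinement available for semismall maps --- to control the Betti numbers of a cofinal tower of finite étale covers, all semismall by Lemma \ref{etale}, and to conclude $b_k(X_i)/d_i\to0$ for $k\neq n$. In that argument the residual finiteness of $\pi_1(X)$ serves only to produce such a tower and, through Lück's theorem, to identify the limits with $b^{(2)}_k(X;\widetilde X)$; here the residual finiteness of $\Gamma$ plays exactly that role, the tower $\{X_i\}$ is available by construction, and the limits are identified with $b^{(2)}_k(X;\hat X)$. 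Hence the estimate applies verbatim and yields $b^{(2)}_k(X;\hat X)=0$ for $k\neq n$, and then $b^{(2)}_n(X;\hat X)=(-1)^n\chi_{\rm top}(X)$.

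Equivalently --- and this is the route to take if the proof of Theorem \ref{main1} is analytic rather than via approximation, in the spirit of \cite{Gro,JZ} --- one reruns that proof directly on $\hat X$: the metric pulled back from a Kähler metric on $X$ is complete on $\hat X$, Atiyah's $L^2$--Hodge decomposition holds for the Galois $\Gamma$--cover $\hat X\to X$, and the vanishing of the space of $L^2$ harmonic $(p,q)$--forms for $p+q\neq n$ is deduced from the semismallness of the Albanese map just as before, since that deduction never uses simple connectivity of the cover --- only that it is a Galois cover of $X$ dominating the $\Z^{2q}$--cover $X\times_{\Alb(X)}\widetilde{\Alb(X)}$, which $\hat X$ does because $\pi_1(X)\to\pi_1(\Alb(X))=\Z^{2q}$ factors through $\Gamma$. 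The step I expect to be the main obstacle is precisely this bookkeeping: isolating from the proof of Theorem \ref{main1} the statement that is genuinely about finite étale covers (or about a complete Kähler cover dominating the Albanese torus cover) and is therefore insensitive to the choice of maximal cover. Once it is extracted, no geometric input beyond Lemma \ref{etale}, generic vanishing, and Lück approximation is required.
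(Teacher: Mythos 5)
Your proposal is correct and follows essentially the same route as the paper: the authors prove Theorem \ref{main2} by observing that the deck group $\pi_1(X)/N$ of $\hat X$ is residually finite, running the same zig-zag construction of finite \'etale covers (all semismall by Lemma \ref{etale}) along a tower converging to $\hat X$, and applying L\"uck approximation in that setting (citing \cite{DD19} for the convergence of finite covers to the algebraic universal cover), exactly as you outline in your first route. Your only deviation is cosmetic --- you describe the Betti-number estimate of Theorem \ref{main1} in terms of the decomposition theorem and generic vanishing rather than the explicit multiplication-map covers of \cite{DCL19a} actually used there --- but the structural point you isolate, that the estimate concerns only finite \'etale covers and is therefore insensitive to which maximal cover one approximates, is precisely the paper's argument.
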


The proof of Theorem \ref{main2} is outlined in Section \ref{Proofs}, and it is very similar to the proof of Theorem \ref{main1}.

We conclude the paper with some applications of our theorems to the $L^2$-cohomology of the topological (algebraic) universal covers of varieties with semismall Albanese maps. We refer to Section \ref{Applications} for the precise statements and the details of the proofs.

\noindent\textbf{Acknowledgments}. 
We are grateful to the referee for his/her/their comments and improving the exposition of the paper.
The first named author thanks  Roberto Svaldi for valuable feedback and comments. He also thanks the Mathematics Department of the University of Milan for the invitation to present this research, for support, and for the nice working environment during his visit in the Spring of 2023. The second named author thanks Alice Garbagnati  for answering to all his questions, and the Mathematics Department of the University of Florida for the optimal working environment provided during his visit in the Spring of 2023.

\section{Proofs of the Main Results}\label{Proofs}

Given a manifold $X$  whose  fundamental group $\Gamma\stackrel{{\rm def}}{=}\pi_1(X)$ is residually finite,  we consider a sequence of nested, normal, finite index subgroups $\{\Gamma_i\}_{i=1}^{\infty}$ of $\Gamma$ such that $\cap_{i=1}^{\infty}\Gamma_i$ is the identity element. Such  sequence is usually called a \emph{cofinal filtration} of $\Gamma$. Define $\pi_i\colon X_i\rightarrow X$ as the finite  regular cover of $X$ associated to $\Gamma_i$. The main result of \cite{Luck} implies that
\begin{align}\label{LIMIT}
\lim_{i \to \infty}\frac{b_{k}(X_i)}{\deg \pi_i } \; = \; b^{(2)}_k (X; \widetilde{X}),
\end{align}
where $b_k(X_i)$ denotes the $k$-th Betti number of $X_i$, and  $b^{(2)}_k (X; \widetilde{X})$ is the $L^2$-Betti number of $X$ computed with respect to the universal cover $\widetilde{X}$. Notice that  this result implies that the limit in   \eqref{LIMIT} always exists and it is \emph{independent} of the cofinal filtration. We refer to the ratio $b_k(X_i)/\deg \pi_i$ as the \emph{normalized} $k$-Betti number of the cover $\pi_i\colon  X_i \rightarrow X$. Thus, outside the middle dimension, the Singer conjecture is equivalent to the sub-degree growth of Betti numbers along a tower of covers associated to a cofinal filtration.

We now turn to details and present our main results. Let $X$ be an irregular smooth projective complex variety of dimension $n$, and let $a_X\colon X \to \Alb(X)$ be its Albanese map. The Albanese torus $\Alb(X)$ is an abelian variety of dimension $g=h^{1, 0}(X)$. Recall that a projective variety is called irregular if $g>0$, that is, if and only if the first Betti number of $X$ is non-zero. 
Define the varieties $\Alb(X)^{\ell} = \{ y\in \Alb(X) \, \big | \, \dim a_X^{-1}(y) = \ell \}$ together with the \emph{defect of 
semismallness} of the Albanese map
$$\delta (a_X) = \max_{ \big\{ \ell \geq 0 \, | \, \Alb(X)^{\ell} \neq \emptyset \big\} }\{ 2 \ell + \dim \Alb(X)^{\ell} - \dim X\}.$$
Then $\delta(a_X)\geq 0$ and if $\delta(a_X)=0$ we say that $a_X$ is \emph{semismall}.
%
%
If $a_X$ is semismall, then it is generically finite onto its image, but the converse does not hold in general. 
For instance, the Albanese map of the blow-up of an abelian variety along a smooth subvariety of codimension $c\geq 2$ 
is semismall if and only if $c= 2$.

\begin{lem}\label{etale}
Let $X$ be a smooth projective variety and let
 $f\colon Y\to X$ be a finite unramified cover. Then  the inequality $\delta(a_X ) \geq \delta(a_Y)$ holds. In particular, if $a_X$ is
 semismall, then also $a_Y$ is semismall.
\end{lem}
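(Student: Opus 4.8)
The plan is to reduce everything to a commutative square relating the Albanese maps of $Y$ and $X$ and then chase dimensions of fibers through it. The key structural fact is that a finite unramified (\emph{\'etale}) cover $f\colon Y\to X$ induces a finite isogeny $\phi\colon \Alb(Y)\to \Alb(X)$ fitting into a commutative diagram
\begin{equation*}
\begin{tikzcd}
Y \arrow{r}{a_Y} \arrow{d}{f} & \Alb(Y) \arrow{d}{\phi} \\
X \arrow{r}{a_X} & \Alb(X),
\end{tikzcd}
\end{equation*}
and moreover this square is \emph{Cartesian} up to the action of the finite group $G = \ker\phi$, in the sense that $Y$ is a connected component of the fiber product $X \times_{\Alb(X)} \Alb(Y)$ (this is the standard correspondence between finite \'etale covers of $X$ and covers of $\Alb(X)$ pulled back via $a_X$, using that $a_X$ induces an isomorphism on $H_1$ modulo torsion, or equivalently that $\pi_1(\Alb(X))$ is the abelianization of $\pi_1(X)$). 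First I would set up this diagram carefully and record that $\phi$ is finite \'etale of some degree $d$, so that $\dim\Alb(Y)=\dim\Alb(X)=g$ and $\dim Y = \dim X = n$.

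Next I would compare fibers. Fix $\ell\ge 0$ and a point $y\in\Alb(Y)^\ell$, i.e. $\dim a_Y^{-1}(y)=\ell$. Set $x=\phi(y)\in\Alb(X)$. The restriction $f\colon a_Y^{-1}(y)\to a_X^{-1}(x)$ is a finite morphism (being the restriction of the finite map $f$), hence $\dim a_Y^{-1}(y)\le \dim a_X^{-1}(x)$; in fact, since $Y$ maps to the fiber product, $a_Y^{-1}(y)$ maps finitely onto a union of components of $a_X^{-1}(x)$, so the dimension is exactly $\ell$ on at least one component, giving $\dim a_X^{-1}(x)\ge \ell$. Thus $x\in \Alb(X)^{\ell'}$ for some $\ell'\ge\ell$. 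This produces a map $\phi\colon \Alb(Y)^{\ell}\to \bigcup_{\ell'\ge\ell}\Alb(X)^{\ell'}$; since $\phi$ is finite, $\dim \Alb(Y)^{\ell}\le \dim\bigl(\bigcup_{\ell'\ge\ell}\Alb(X)^{\ell'}\bigr)=\max_{\ell'\ge\ell}\dim\Alb(X)^{\ell'}$. Pick $\ell'\ge\ell$ realizing this maximum, so $\dim\Alb(Y)^{\ell}\le\dim\Alb(X)^{\ell'}$.

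Now I would simply plug into the definition of the defect. For the chosen data,
\begin{equation*}
2\ell + \dim\Alb(Y)^{\ell} - \dim Y \;\le\; 2\ell + \dim\Alb(X)^{\ell'} - \dim X \;\le\; 2\ell' + \dim\Alb(X)^{\ell'} - \dim X \;\le\; \delta(a_X),
\end{equation*}
using $\ell\le\ell'$ in the middle and $\dim Y=\dim X$. Taking the maximum over all $\ell$ with $\Alb(Y)^\ell\ne\emptyset$ yields $\delta(a_Y)\le\delta(a_X)$, and in particular $\delta(a_X)=0$ forces $\delta(a_Y)=0$. The one point that needs genuine care — and which I expect to be the main obstacle — is justifying that $a_X$ and $a_Y$ fit into the displayed square with $\phi$ \emph{finite}: this relies on the universal property of the Albanese (every morphism from $Y$ to an abelian variety factors through $a_Y$, applied to $a_X\circ f$) together with the fact that a finite \'etale cover of an irregular variety induces an isogeny rather than a map with positive-dimensional fibers, which in turn comes from $f$ being finite and the Albanese of $Y$ being generated by the image of $Y$. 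Once this is in place, the fiber-dimension bookkeeping above is routine; I would also remark that if $X$ is not irregular the statement is vacuous (both defects are computed from empty data, or one interprets $\Alb$ as a point), so no generality is lost.
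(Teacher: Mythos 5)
There is a genuine gap in your argument. The structural claim on which your dimension count rests --- that $f$ induces a \emph{finite} isogeny $\phi\colon \Alb(Y)\to\Alb(X)$ with $\dim\Alb(Y)=\dim\Alb(X)$, and that $Y$ is a component of $X\times_{\Alb(X)}\Alb(Y)$ --- is false in general. A finite unramified cover can strictly increase the irregularity, so the induced map on Albanese varieties is only surjective and may have positive-dimensional fibers; only those covers classified by finite quotients of $H_1(X,\Z)$ modulo torsion are pulled back from isogenies of $\Alb(X)$. The remark immediately following the lemma in the paper gives the standard counterexample: a bielliptic surface $X$ covered by an abelian surface $Y$, where $\dim\Alb(Y)=2$ and $\dim\Alb(X)=1$. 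There your key intermediate inequality $\dim\Alb(Y)^{\ell}\le\max_{\ell'\ge\ell}\dim\Alb(X)^{\ell'}$ fails: one has $\dim\Alb(Y)^{0}=2$ while $\Alb(X)^{1}=\Alb(X)$ has dimension $1$, so the first step of your displayed chain would assert $2\le 1$ (and the chain as a whole would read $0\le -1$). The lemma is still true in that example only because of the slack coming from $\ell\le\ell'$, which your chain spends in the wrong place.

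The part of your argument that is correct is the fiber comparison through the finite map $f$, and this is exactly the paper's proof: for $q\in\Alb(Y)$ and $p=a_f(q)$ one has $\dim a_Y^{-1}(q)\le \dim a_Y^{-1}(a_f^{-1}(p))=\dim (a_X\circ f)^{-1}(p)=\dim a_X^{-1}(p)$, using only that $f$ is finite and surjective (no finiteness of $a_f$ is needed). To repair the defect estimate, do the bookkeeping upstairs in $Y$ and $X$, where the comparison map $f$ really is finite, rather than downstairs in the Albanese varieties: the set $a_Y^{-1}(\Alb(Y)^{\ell})$ has dimension $\ell+\dim\Alb(Y)^{\ell}$, its image under $f$ has the same dimension and is contained in $\bigcup_{\ell'\ge\ell}a_X^{-1}(\Alb(X)^{\ell'})$ by the fiber comparison, whence $\ell+\dim\Alb(Y)^{\ell}\le \ell''+\dim\Alb(X)^{\ell''}$ for some $\ell''\ge\ell$. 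Adding the inequality $\ell\le\ell''$ and subtracting $n=\dim X=\dim Y$ gives $2\ell+\dim\Alb(Y)^{\ell}-n\le 2\ell''+\dim\Alb(X)^{\ell''}-n\le\delta(a_X)$, which is the desired bound.
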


\begin{proof}
Let $a_f \colon \Alb(X) \to \Alb(Y)$ be the induced morphism induced by the universal property of the Albanese variety so that the 
following diagram commutes
\begin{equation}\notag
\centerline{ \xymatrix@=32pt{
Y\ar[d]^{f} \ar[r]^{a_Y\,\,\,\,\,\,\,} & \Alb(Y) \ar[d]^{a_f}  \\
X  \ar[r]^{a_X\,\,\,\,\,\,\,}  & \Alb(X).\\}} 
\end{equation}
We notice that $a_f$ is surjective since $f$ is so. 
Let $p\in a_X(X)$ and $q\in a_f^{-1}(p)$. There is an inequality
$$ \dim a_X^{-1}( p ) = \dim (a_X \circ f)^{-1} (p) = \dim (a_f \circ a_Y)^{-1} (p) \geq \dim a_Y^{-1}(q)$$ 
showing that the fiber dimension of the Albanese map does not increase in finite covers.

\end{proof}

%

\begin{rem}
The converse of Lemma \ref{etale} does not hold in general. 
For instance  one can consider a bielliptic surface and the covering abelian surface associated to the canonical divisor. 
For more details see \cite[Chapter VI]{Bea}. In particular bielliptic surfaces are aspherical surfaces that admit a finite unramified cover
with semismall Albanese map.
\end{rem}

%

We can now prove our main theorem.

\begin{proof}[Proof of Theorem \ref{main1}]

Fix an integer $k\neq n$ and 
let us consider a cofinal tower $\tau_{\ell} \colon X_{\ell} \to X$ of  $\pi_1(X)$:
$$ X \leftarrow X_1 \leftarrow X_2 \leftarrow \cdots \leftarrow X_{\ell}\leftarrow \cdots$$
By Lemma \ref{etale} the Albanese maps of the varieties $X_{\ell}$ are semismall.
We will construct a new cofinal tower  of $\pi_1(X)$  with a control on the $b_k$
by means of  covers induced by multiplication maps on the Albanese varieties as in \cite[Corollary 1.2]{DCL19a}.

Let $\psi_1 \colon Y_1\to X_1$ be the unramified cover constructed as the pullback of a multiplication map
$\mu_{d} \colon \Alb(X_1) \to \Alb(X_1)$ ($d\gg 1$)   such that 
$$\frac{b_k(Y_1)}{\deg \psi_1} \; \leq\; 1$$ (\emph{cf}. \cite[Corollary 1.2]{DCL19a}).
Denote by  $\varphi_1 = (\tau_1 \circ \psi_1 )\colon Y_1 \to X$   the natural composition map. 
We easily check that 
$$\frac{b_k(Y_1)}{\deg \varphi_1} \; \leq \;  \frac{b_k(Y_1)}{\deg \psi_1} \; \leq\;  1.$$
Now let $Z_1$ be the 
pullback of the cover $X_2 \to X_1$ along the map $\psi_1$. 
We can repeat the previous procedure in order to  construct an unramified cover 
$\psi_2 \colon Y_2 \to Z_1$ such that 
$$\frac{b_k(Y_2)}{\deg \psi_2} \; \leq \;  \frac{1}{2}.$$
By setting $Z_2$ for the pullback of the cover $X_3 \to X_2$  along the composition $Y_2\to Z_1\to X_2$, 
we can  reiterate the process and 
construct the following commutative diagram
$$
\centerline{ \xymatrix@=38pt{
\vdots & \vdots & \vdots & \vdots \\
 X_3 \ar[d] \ar@/_3pc/[ddd]_{\tau_3} & \ar[l] &  Z_2 \ar[d]  & Y_3\ar[l]_{\psi_3} \ar[dl] \ar@/^3pc/[dddlll]^{\varphi_3}\\
 X_2 \ar[d]\ar@/_1.5pc/[dd]_{\tau_2} &  Z_1 \ar[d]\ar[l] & Y_2\ar[l]_{\psi_2}\ar[dl] \ar@/^1.5pc/[ddll]^{\varphi_2}\\
 X_1\ar[d]_{\tau_1} & Y_1\ar[l]_{\psi_1}\ar[dl]^{\varphi_1}  \\
 X\\}}
 $$
%
such that 
$$\frac{b_k(Y_{\ell})}{\deg \psi_{\ell}}\leq \frac{1}{\ell}$$
  for all $\ell\geq 1$. Denote now by $\varphi_\ell \colon Y_{\ell} \to X$ the natural composition map defined as in the previous commutative diagram.
Since the covers $\{\tau_{\ell}\}_{\ell =1}^{\infty}$ form a cofinal filtration of $\pi_1(X)$,   also the sequence 
$\{\varphi_{\ell}\}_{\ell =1}^{\infty}$ forms a cofinal filtration of $\pi_1(X)$.
Moreover for any $\ell\geq 1$ we obtain
$$\frac{b_k(Y_{\ell})}{ \deg \varphi_{\ell}} \;  \leq \;  \frac{b_k(Y_{\ell})}{\deg \psi_{\ell}}\;  \leq \;  \frac{1}{\ell}.$$ 
The conclusion follows by applying L\"uck's approximation theorem \cite{Luck} along the covers $\varphi_{\ell}$.
\end{proof}

The proof of Theorem \ref{main2} is  completely analogous. Indeed, by definition of $\hat{X}$ one can construct a zig-zag sequence as in the proof of Theorem \ref{main1} converging to $\hat{X}$. For more details about the algebraic universal cover and how to generate sequences of covers that converge to it, we refer to \cite[Theorem 2.7]{DD19}. 

\section{Application: Existence of $L^2$-Integrable Harmonic Forms}\label{Applications}
 
In this section, we collect a few applications of our results to the $L^2$-cohomology of the (algebraic) universal cover of a smooth projective variety such that $a_X\colon X \to \Alb(X)$ is semismall. Let $\pi \colon \widetilde{X}\to X$ be the topological universal cover, and let $\hat{\pi} \colon \hat{X}\to X$ be the algebraic universal cover. We start by defining $L^2$-cohomology. Given any Riemannian metric on $X$, consider its pull-back to $\widetilde{X}$. By using the pulled back metric, define the Hilbert space of smooth $L^2$-integrable harmonic  $k$-forms 
\[
\mathcal{H}^k_{(2)} (\widetilde{X}) \; = \; \big\{ \, \omega\in\Omega^k (\widetilde{X}) \;  | \;  \Delta_d \,  \omega = 0, \; \int_{\widetilde{X}} \omega \wedge *\omega  <\infty \big\}
\]
where $*$ is the Hodge-star operator and $\Delta_d = d d^* + d^* d$ is the Hodge-Laplacian operator. These spaces do not depend on the given metric considered on $X$ (\emph{cf.} \cite{Atiyah})  and  compute the $L^2$-cohomology of $\widetilde{X}$. 

\begin{thm}\label{corl1}
Let $X$ be a smooth projective variety of complex dimension $n$ such that the Albanese map  $a_X\colon X \to \Alb(X)$ is semismall and $\pi_1(X)$ is residually finite.
If $\chi_{\rm top}(X) \neq 0$, then there exists a nontrivial harmonic $L^2$-integrable $n$-form on the topological universal cover $\widetilde X$.
\end{thm}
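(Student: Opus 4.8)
The plan is to leverage Theorem \ref{main1} directly, since the hypotheses of Theorem \ref{corl1} are exactly those of Theorem \ref{main1}. By Theorem \ref{main1}, the $L^2$-Betti numbers of $X$ with respect to $\widetilde X$ vanish outside the middle degree $n$ and satisfy $b^{(2)}_n(X;\widetilde X) = (-1)^n \chi_{\rm top}(X)$. The key point connecting $L^2$-Betti numbers to actual harmonic forms is that $b^{(2)}_n(X;\widetilde X)$ is, by definition, the von Neumann dimension $\dim_{\Gamma} \mathcal{H}^n_{(2)}(\widetilde X)$ of the space of $L^2$-integrable harmonic $n$-forms (this is the Atiyah $L^2$-index construction recalled at the start of Section \ref{Applications}).

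First I would observe that if $\chi_{\rm top}(X) \neq 0$, then $b^{(2)}_n(X;\widetilde X) = (-1)^n \chi_{\rm top}(X) \neq 0$, so in particular $\dim_\Gamma \mathcal{H}^n_{(2)}(\widetilde X) > 0$. Next I would use the fundamental property of the von Neumann dimension of a Hilbert $\Gamma$-module: it is a nonnegative real number that is zero if and only if the module itself is the zero module (faithfulness of the von Neumann trace). Hence a strictly positive von Neumann dimension forces $\mathcal{H}^n_{(2)}(\widetilde X) \neq 0$, i.e. there exists a nontrivial smooth $L^2$-integrable harmonic $n$-form on $\widetilde X$. This is essentially a one-line deduction once the definitions are unwound, and I would cite \cite{Luck02} (or \cite{Atiyah}) for the identification of $b^{(2)}_n$ with $\dim_\Gamma \mathcal{H}^n_{(2)}(\widetilde X)$ and for the faithfulness of the trace.

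There is no serious obstacle here; the only thing to be careful about is the bookkeeping of conventions — namely that the $L^2$-Betti number computed via the Lück approximation limit in \eqref{LIMIT} agrees with the analytic $L^2$-Betti number defined as the von Neumann dimension of the space of harmonic $L^2$ forms. This identification is standard (it is part of why the spaces $\mathcal{H}^k_{(2)}(\widetilde X)$ are said to ``compute the $L^2$-cohomology of $\widetilde X$'' in the text), so I would simply invoke it. If one wanted to be fully self-contained, one could alternatively note that $\chi_{\rm top}(X) \neq 0$ together with the vanishing of all $b^{(2)}_k$ for $k \neq n$ and the $L^2$ Euler characteristic formula $\chi_{\rm top}(X) = \sum_k (-1)^k b^{(2)}_k(X;\widetilde X)$ forces $b^{(2)}_n \neq 0$, and then conclude as above.
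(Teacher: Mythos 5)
Your argument is correct and is essentially the same as the paper's: the paper likewise combines Theorem \ref{main1} with L\"uck's approximation theorem and cites Jost--Zuo for exactly the identification you describe, namely that $b^{(2)}_n(X;\widetilde X)=\dim_\Gamma \mathcal{H}^n_{(2)}(\widetilde X)$ and that a nonzero von Neumann dimension forces the space of $L^2$-harmonic $n$-forms to be nontrivial. Your remark about matching the approximation-theoretic and analytic definitions of the $L^2$-Betti numbers is precisely the bookkeeping the paper delegates to those references.
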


\begin{proof}
The proof is a combination of Theorem \ref{main1}, L\"uck approximation theorem, and \cite[pp. 6-7]{JZ}.
\end{proof}

If $\pi_1(X)$ is not residually finite, by Theorem \ref{main2} we have an analogous result for the $L^2$-cohomology of the algebraic universal cover $\hat{X}$.

\begin{thm}\label{corl2}
Let $X$ be a smooth projective variety of complex dimension $n$  such that the Albanese map $a_X\colon X \to \Alb(X)$ is semismall.
If $\chi_{\rm top}(X) \neq 0$, then there exists a nontrivial harmonic $L^2$-integrable $n$-form on the algebraic universal cover $\hat{X}$. 
\end{thm}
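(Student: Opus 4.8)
The plan is to mirror the structure of the proof of Theorem \ref{main1}, simply replacing the tower coming from residual finiteness with the zig-zag tower that converges to the algebraic universal cover $\hat{X}$. First I would recall (following \cite[Theorem 2.7]{DD19}) that $\hat{X} = \varprojlim X_\ell$ for a sequence of finite \'etale covers $X_\ell \to X$ that is cofinal among all finite \'etale covers, and that L\"uck-type approximation holds for $L^2$-Betti numbers computed along any such exhausting sequence, so that $\lim_{\ell\to\infty} b_k(X_\ell)/\deg(X_\ell\to X) = b^{(2)}_k(X;\hat X)$. The middle-dimension value $b^{(2)}_n(X;\hat X) = (-1)^n\chi_{\rm top}(X)$ is then automatic from the Atiyah $L^2$-index theorem (the Euler characteristic of a finite cover scales with the degree, so $\chi^{(2)}(X;\hat X) = \chi_{\rm top}(X)$), together with the vanishing in all other degrees that is the substance of the theorem. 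Hence it suffices to show $b^{(2)}_k(X;\hat X) = 0$ for each fixed $k \neq n$.

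Next I would fix $k \neq n$ and build, exactly as in the proof of Theorem \ref{main1}, a cofinal tower converging to $\hat X$ along which the normalized $k$-th Betti numbers are squeezed to $0$. Starting from $X_1 \to X$, Lemma \ref{etale} guarantees that $a_{X_1}$ is semismall, so by \cite[Corollary 1.2]{DCL19a} there is a cover $\psi_1\colon Y_1 \to X_1$ pulled back from a multiplication map $\mu_d\colon \Alb(X_1)\to\Alb(X_1)$ with $b_k(Y_1)/\deg\psi_1 \le 1$; composing with $X_1\to X$ only decreases the normalized Betti number. Then one forms $Z_1$ as the fiber product of $X_2\to X_1$ with $Y_1\to X_1$, applies \cite[Corollary 1.2]{DCL19a} again to $Z_1$ (still semismall by Lemma \ref{etale}) to get $\psi_2\colon Y_2\to Z_1$ with $b_k(Y_2)/\deg\psi_2 \le \tfrac12$, and iterates, producing the same commutative zig-zag diagram displayed in the proof of Theorem \ref{main1} with $b_k(Y_\ell)/\deg\psi_\ell \le 1/\ell$ and $b_k(Y_\ell)/\deg\varphi_\ell \le 1/\ell$ for the composite maps $\varphi_\ell\colon Y_\ell \to X$.

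The one point requiring a little care — and the step I expect to be the main obstacle — is verifying that the resulting tower $\{\varphi_\ell\colon Y_\ell\to X\}$ is still \emph{cofinal} among finite \'etale covers of $X$ in the sense needed to converge to $\hat X$, rather than merely a cofinal filtration of $\pi_1(X)$ as in the residually finite case. Since each $Y_\ell$ dominates $X_\ell$ and the $X_\ell$ already form a cofinal system (every finite \'etale cover of $X$ is dominated by some $X_\ell$, hence by $Y_\ell$), the $Y_\ell$ form a cofinal system as well; one must only note that $\hat X$, being the inverse limit over all finite \'etale covers, is also the inverse limit over this cofinal subsystem, so the approximation statement of \cite[Theorem 2.7]{DD19} applies verbatim along the $\varphi_\ell$. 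Granting this, passing to the limit gives $0 \le b^{(2)}_k(X;\hat X) = \lim_{\ell} b_k(Y_\ell)/\deg\varphi_\ell \le \lim_\ell 1/\ell = 0$, which completes the proof; the statement of Theorem \ref{main2} then follows by combining this vanishing with the Atiyah index-theorem computation of the middle $L^2$-Betti number.
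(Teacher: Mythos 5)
Your proposal is a faithful reconstruction of the proof of Theorem \ref{main2}: the zig-zag tower, the use of Lemma \ref{etale} and \cite[Corollary 1.2]{DCL19a}, the cofinality check, and the appeal to \cite[Theorem 2.7]{DD19} all match the paper's argument, and that part is correct. But the statement you were asked to prove is Theorem \ref{corl2}, which asserts the \emph{existence of a nontrivial harmonic $L^2$-integrable $n$-form} on $\hat X$. Your write-up ends by deducing Theorem \ref{main2}, i.e.\ the numerical identity $b^{(2)}_n(X;\hat X)=(-1)^n\chi_{\rm top}(X)$, and never produces --- or explains why there must exist --- an actual harmonic form. This is a genuine gap, not a formality: the quantity computed by L\"uck-type approximation is a von Neumann dimension of a combinatorial (cellular) $L^2$-cohomology module, whereas $\mathcal{H}^n_{(2)}(\hat X)$ is an analytically defined space of smooth harmonic forms. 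The bridge between them is the $L^2$ Hodge--de Rham theorem of Atiyah and Dodziuk, which identifies $b^{(2)}_n(X;\hat X)$ with $\dim_{\mathcal{N}(G)}\mathcal{H}^n_{(2)}(\hat X)$ for the deck group $G$ of $\hat X\to X$; this is exactly the ingredient the paper imports from \cite[pp.~6-7]{JZ} in the proof of the parallel Theorem \ref{corl1}, and the intended proof of Theorem \ref{corl2} is the same combination with Theorem \ref{main1} replaced by Theorem \ref{main2}.

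Once that identification is stated, the conclusion is immediate and you should include it: since $b^{(2)}_k(X;\hat X)=0$ for all $k\neq n$, the $L^2$-index theorem gives $b^{(2)}_n(X;\hat X)=(-1)^n\chi_{\rm top}(X)\geq 0$, and the hypothesis $\chi_{\rm top}(X)\neq 0$ forces $\dim_{\mathcal{N}(G)}\mathcal{H}^n_{(2)}(\hat X)>0$; a Hilbert $\mathcal{N}(G)$-module of positive von Neumann dimension is nonzero, so $\mathcal{H}^n_{(2)}(\hat X)\neq 0$ and a nontrivial $L^2$-integrable harmonic $n$-form exists. In short: you have correctly proved the main input, but the step that actually yields the harmonic form --- the analytic interpretation of the $L^2$-Betti number --- is missing from your argument and must be added.
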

%

%
\bibliography{biblsinger}

\end{document}